\documentclass[11pt,a4paper]{article}

\usepackage[T1]{fontenc}
\usepackage[utf8]{inputenc}
\usepackage{lmodern}
\usepackage{amsmath,amssymb,amsthm,mathtools}
\usepackage{microtype}
\usepackage{xcolor}
\usepackage{xurl}
\usepackage[a4paper,margin=30mm]{geometry}
\usepackage[
  colorlinks=true,
  linkcolor=blue!45!black,
  citecolor=blue!45!black,
  urlcolor=blue!55!black,
]{hyperref}

\allowdisplaybreaks[2]

\newtheorem{theorem}{Theorem}[section]
\newtheorem{proposition}[theorem]{Proposition}
\newtheorem{lemma}[theorem]{Lemma}
\newtheorem{corollary}[theorem]{Corollary}
\theoremstyle{definition}
\newtheorem{definition}[theorem]{Definition}
\theoremstyle{remark}
\newtheorem{remark}[theorem]{Remark}

\newcommand{\OO}{\mathcal O}
\newcommand{\Apol}{\mathcal A}
\newcommand{\Gpol}{\mathcal G}
\newcommand{\Fpol}{\mathcal F}

\title{Two-Line Elimination and the Weierstrass Group Law\\
in Arbitrary Characteristic}
\author{Hongfeng Wu\\
College of Science, North China University of Technology\\
\texttt{whfmat@gmail.com}}
\date{}

\begin{document}

\maketitle

\begin{abstract}
Let $K$ be a field and let $E/K$ be a nonsingular projective curve in general Weierstrass form.
We prove associativity of its chord--tangent operation by combining polynomial factorization, formal differentiation, and quadratic interpolation.
The main identity eliminates the two nonvertical lines $Y=\ell(X)$ and $Y=m(X)$ occurring in the successive additions $S=P\oplus Q$ and $T=S\oplus R$.
There are $c\in K$ and a monic quadratic polynomial $h\in K[X]$ such that
\[
 \ell+m+\Apol=c(X-x_S),
 \qquad
 \Gpol+\ell m=(X-x_S)h,
\]
and
\[
 (Y-\ell)(Y-m)
 =(X-x_S)(h-cY)
 +Y^2+\Apol(X)Y-\Gpol(X).
\]
When $c\ne0$, the graph of $q=h/c$ contains $P,Q,R$, and $-T$.
The two bracketings determine the same quadratic by ordinary or Hermite interpolation and therefore yield the same final point.
The proof applies in every characteristic, since tangency is expressed through the formal partial derivatives of the Weierstrass equation.
For the short Weierstrass form, a separate concise proof identifies $q$ with the interpolating parabola in Zwegers's construction.
\end{abstract}

\noindent\textbf{Keywords.}
Elliptic curve; Weierstrass equation; group law; associativity; arbitrary characteristic; polynomial interpolation.

\medskip
\noindent\textbf{2020 Mathematics Subject Classification.}
Primary 14H52; Secondary 11G05, 14Q05.

\section{Introduction}

The chord--tangent operation on a nonsingular plane cubic is geometrically natural, but its associativity is not immediate from the construction.
Conceptual proofs use the Picard group or related algebro-geometric machinery \cite[Chapter~III]{Silverman2009}; other proofs use complex uniformization, the Cayley--Bacharach theorem, or direct coordinate calculation.
Elementary treatments based on explicit formulas include Friedl's computational argument \cite{Friedl2017}, Nuida's linear-algebraic proof for a general Weierstrass equation \cite{Nuida2021}, and the formally verified arbitrary-characteristic proof of Angdinata and Xu \cite{AngdinataXu2023}.
In 2024, Zwegers gave a particularly short argument for a short Weierstrass equation by deriving a symmetric formula for a sum of three points and interpreting the formula through an interpolating parabola \cite{Zwegers2024}.

The central construction is a low-degree polynomial identity that extends the parabola method directly to
\begin{equation}\label{eq:intro-general}
 Y^2+a_1XY+a_3Y
 =X^3+a_2X^2+a_4X+a_6
\end{equation}
over an arbitrary field.
Its ingredients are factorization in $K[X]$, formal differentiation, and uniqueness of interpolation by polynomials of degree at most two.
All multiplicities that occur are double roots of univariate polynomials and are recorded by first derivatives.

The first line in the computation of $(P\oplus Q)\oplus R$ passes through $P,Q,-S$, where $S=P\oplus Q$, whereas the second passes through $S,R,-T$, where $T=(P\oplus Q)\oplus R$.
Their values above $x_S$ are conjugate roots of the Weierstrass equation.
Their sum can therefore be divided by $X-x_S$, as can a second expression obtained from their product.
The resulting identity produces a quadratic polynomial whose graph contains $P,Q,R,-T$ with the required tangency when points coincide.
Repeating the construction for $P\oplus(Q\oplus R)$ gives a second quadratic with the same interpolation data.
Degree two leaves no freedom, so the two quadratics and then the two final sums coincide.

Section~\ref{sec:preliminaries} records the elementary chord--tangent facts used without assuming associativity.
Section~\ref{sec:identity} proves the two-line elimination identity.
Section~\ref{sec:short} gives a separate concise proof for the short Weierstrass form.
Section~\ref{sec:general-associativity} proves associativity for the general equation, and Section~\ref{sec:small-characteristic} makes the specializations to characteristics $2$ and $3$ explicit.
Section~\ref{sec:comparison} identifies the short-form quadratic with Zwegers's parabola and compares the proof with other elementary approaches.

\section{Elementary chord--tangent preliminaries}\label{sec:preliminaries}

Let $K$ be a field of arbitrary characteristic, let $a_1,a_2,a_3,a_4,a_6\in K$, and let $E\subset\mathbb P^2_K$ be the nonsingular cubic
\begin{equation}\label{eq:projective-general}
 Y^2Z+a_1XYZ+a_3YZ^2
 =X^3+a_2X^2Z+a_4XZ^2+a_6Z^3.
\end{equation}
Setting $Z=0$ in \eqref{eq:projective-general} gives $X=0$, so the unique point at infinity is
\[
 \OO=[0:1:0].
\]
At the representative $[0:1:0]$, the partial derivative with respect to $Z$ of the left side minus the right side of \eqref{eq:projective-general} is $1$; in particular, $\OO$ is nonsingular.
On the affine chart $Z=1$, equation \eqref{eq:projective-general} becomes \eqref{eq:intro-general}.
Put
\begin{equation}\label{eq:AGF}
 \Apol(X)=a_1X+a_3,
 \qquad
 \Gpol(X)=X^3+a_2X^2+a_4X+a_6,
 \qquad
 \Fpol(X,Y)=Y^2+\Apol(X)Y-\Gpol(X).
\end{equation}
Thus the affine part of $E$ is given by $\Fpol=0$.

For a finite point $P=(x_P,y_P)$, define
\begin{equation}\label{eq:negative}
 -P=\bigl(x_P,-\Apol(x_P)-y_P\bigr),
 \qquad
 -\OO=\OO.
\end{equation}
The ordinates $y_P$ and $-\Apol(x_P)-y_P$ are the two roots, counted with multiplicity, of the monic quadratic $\Fpol(x_P,Y)$.
Thus \eqref{eq:negative} exchanges the two points above a fixed affine $x$-coordinate.
Consequently, two finite points of $E$ have the same $x$-coordinate if and only if they are equal or negatives of one another.
The formal partial derivatives are
\begin{equation}\label{eq:partials}
 \Fpol_X(X,Y)=a_1Y-\Gpol'(X),
 \qquad
 \Fpol_Y(X,Y)=2Y+\Apol(X),
\end{equation}
where
\[
 \Gpol'(X)=3X^2+2a_2X+a_4.
\]
In particular,
\begin{equation}\label{eq:self-inverse}
 P=-P
 \quad\Longleftrightarrow\quad
 \Fpol_Y(P)=2y_P+\Apol(x_P)=0.
\end{equation}
If a finite point is self-inverse, nonsingularity gives $\Fpol_X(P)\ne0$, so its tangent is vertical.

\begin{definition}[Chord--tangent operation]\label{def:addition}
Set $P\oplus\OO=\OO\oplus P=P$ and, for a finite point $U$, set $U\oplus(-U)=\OO$.
Suppose that $U,V$ are finite and $V\ne-U$.
The relevant nonvertical chord or tangent is
\[
 Y=\lambda_{U,V}(X)
 =y_U+\mu_{U,V}(X-x_U),
\]
where
\begin{equation}\label{eq:slope}
 \mu_{U,V}=
 \begin{cases}
 \dfrac{y_V-y_U}{x_V-x_U},&U\ne V,\\[7pt]
 -\dfrac{\Fpol_X(U)}{\Fpol_Y(U)}
 =\dfrac{3x_U^2+2a_2x_U+a_4-a_1y_U}
 {2y_U+a_1x_U+a_3},&U=V.
 \end{cases}
\end{equation}
Define $U\oplus V=W=(x_W,y_W)$ by
\begin{align}
 x_W&=\mu_{U,V}^2+a_1\mu_{U,V}-a_2-x_U-x_V,
 \label{eq:add-x}\\
 y_W&=-\Apol(x_W)-\lambda_{U,V}(x_W).
 \label{eq:add-y}
\end{align}
\end{definition}

Every denominator in \eqref{eq:slope} is nonzero when it is used.
Indeed, if $U\ne V$ and $V\ne-U$, then $x_U\ne x_V$.
If $U=V\ne-U$, then \eqref{eq:self-inverse} gives $\Fpol_Y(U)\ne0$.

\begin{lemma}[Line factorization]\label{lem:line-factor}
Let $U,V$ be finite points with $V\ne-U$, put $W=U\oplus V$, and write $\lambda=\lambda_{U,V}$.
Then
\begin{equation}\label{eq:line-factor}
 \Gpol(X)-\lambda(X)^2-\Apol(X)\lambda(X)
 =(X-x_U)(X-x_V)(X-x_W),
\end{equation}
where the first two factors coincide when $U=V$.
Thus the third intersection of $Y=\lambda(X)$ with $E$, counted with tangency, is $-W$.
In particular, $W\in E(K)$, so Definition~\ref{def:addition} is well-defined.
\end{lemma}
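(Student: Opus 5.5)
The plan is to substitute the line into the curve and compare coefficients. Put $\Phi(X)=\Gpol(X)-\lambda(X)^2-\Apol(X)\lambda(X)=-\Fpol(X,\lambda(X))$. Since $\lambda$ has degree at most one and $\Apol$ has degree at most one, $\Phi$ is a monic cubic in $K[X]$. Its $X^2$-coefficient is $a_2-\mu^2-a_1\mu$, where $\mu=\mu_{U,V}$, because only $\Gpol$, $\lambda^2$ and $\Apol\lambda$ contribute to that coefficient.

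The first step is to show that $(X-x_U)(X-x_V)$ divides $\Phi$, with the square $(X-x_U)^2$ when $U=V$. When $U\ne V$, we have $x_U\ne x_V$ (as remarked after Definition~\ref{def:addition}), and $\lambda(x_U)=y_U$, $\lambda(x_V)=y_V$ by the choice of slope. Hence $\Phi(x_U)=-\Fpol(U)=0$ and $\Phi(x_V)=-\Fpol(V)=0$, and two distinct roots give the factor.

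When $U=V$, we need a double root, and this is the only step needing care in arbitrary characteristic. I would use the formal derivative: by the chain rule for polynomials,
\[
\Phi'(X)=-\Fpol_X(X,\lambda(X))-\mu\,\Fpol_Y(X,\lambda(X)).
\]
At $x_U$ this gives $-\Fpol_X(U)-\mu\Fpol_Y(U)$, which vanishes by the tangent slope in \eqref{eq:slope}. So $\Phi(x_U)=\Phi'(x_U)=0$. Writing $\Phi=(X-x_U)\psi$ gives $\Phi'(x_U)=\psi(x_U)$, hence $\psi(x_U)=0$ and $(X-x_U)^2\mid\Phi$. No division by any integer occurs, so the argument is valid in every characteristic.

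The second step identifies the third root. Write $\Phi=(X-x_U)(X-x_V)(X-r)$ with $r\in K$. Comparing $X^2$-coefficients gives $-(x_U+x_V+r)=a_2-\mu^2-a_1\mu$, so $r=\mu^2+a_1\mu-a_2-x_U-x_V=x_W$ by \eqref{eq:add-x}. This proves \eqref{eq:line-factor}.

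The last step locates the third point on $E$. Since $\Phi(x_W)=0$, the point $(x_W,\lambda(x_W))$ satisfies $\Fpol=0$, so it lies on $E$. Its negative, given by \eqref{eq:negative}, is $(x_W,-\Apol(x_W)-\lambda(x_W))=W$ by \eqref{eq:add-y}. Hence $W\in E(K)$, and the third intersection, counted with multiplicity through the factorization, is $-W$. The only potential obstacle is the tangent case, which is settled by the formal derivative argument above.
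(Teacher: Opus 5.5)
Your proposal is correct and follows essentially the same route as the paper: substitute the line to get the monic cubic $-\Fpol(X,\lambda(X))$, obtain the roots $x_U,x_V$ (a double root via the formal chain rule in the tangent case), identify the third root as $x_W$ by comparing $X^2$-coefficients, and read off $-W=(x_W,\lambda(x_W))$ on $E$. Your explicit factorization $\Phi=(X-x_U)\psi$ with $\Phi'(x_U)=\psi(x_U)$ just spells out the double-root step that the paper leaves implicit.
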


\begin{proof}
Set
\[
 H_\lambda(X)=\Gpol(X)-\lambda(X)^2-\Apol(X)\lambda(X)
 =-\Fpol(X,\lambda(X)).
\]
This is a monic cubic.
If $U\ne V$, then the line passes through both points, so $x_U$ and $x_V$ are roots.
If $U=V$, then $H_\lambda(x_U)=0$ and the chain rule together with \eqref{eq:slope} gives
\[
 H_\lambda'(x_U)
 =-\bigl(\Fpol_X(U)+\Fpol_Y(U)\mu_{U,U}\bigr)=0.
\]
Hence $x_U$ is a double root in the tangent case.
The coefficient of $X^2$ in $H_\lambda$ is $a_2-\mu_{U,V}^2-a_1\mu_{U,V}$.
Vieta's relation and \eqref{eq:add-x} identify the third root as $x_W$.
At that root the point on the line has ordinate $\lambda(x_W)=-\Apol(x_W)-y_W$, so it is $-W$ and lies on $E$.
The conjugation \eqref{eq:negative} preserves the equation of $E$, and hence $W\in E(K)$.
\end{proof}

For an affine linear polynomial $L(X)$, define its conjugate by
\begin{equation}\label{eq:conjugate-line}
 L^\iota(X)=-\Apol(X)-L(X).
\end{equation}
The line $Y=L^\iota(X)$ contains $-P$ whenever $Y=L(X)$ contains $P$, and direct expansion gives
\begin{equation}\label{eq:conjugate-factor}
 \Gpol-(L^\iota)^2-\Apol L^\iota
 =\Gpol-L^2-\Apol L.
\end{equation}

\begin{lemma}[No nontrivial fixed summand]\label{lem:no-fixed}
If $U\oplus V=U$, then $V=\OO$.
\end{lemma}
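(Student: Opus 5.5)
We argue by contradiction and split into cases according to the definition of $\oplus$. Suppose $U\oplus V=U$ and $V\ne\OO$. If $U=\OO$, then $U\oplus V=V$ by Definition~\ref{def:addition}, so $V=\OO$, a contradiction. Hence $U$ and $V$ are finite. If $V=-U$, then $U\oplus V=\OO\ne U$, which is impossible. So $U,V$ are finite with $V\ne-U$, and $W=U\oplus V$ is given by \eqref{eq:add-x}--\eqref{eq:add-y}. The hypothesis is $W=U$.

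Next we apply Lemma~\ref{lem:line-factor}. With $\lambda=\lambda_{U,V}$,
\[
 \Gpol-\lambda^2-\Apol\lambda=(X-x_U)(X-x_V)(X-x_W)=(X-x_U)^2(X-x_V),
\]
and the third intersection of the line with $E$ is $-W=-U$. The line passes through $U$ (by construction $\lambda(x_U)=y_U$) and through the point above $x_W=x_U$ with ordinate $\lambda(x_U)=-\Apol(x_U)-y_W=-\Apol(x_U)-y_U$. Comparing these two values of $\lambda(x_U)$ gives $y_U=-\Apol(x_U)-y_U$, that is, $\Fpol_Y(U)=0$. By \eqref{eq:self-inverse} this means $U=-U$.

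Now we use the vertical tangent. Since $U=-U$ and $U$ is finite, nonsingularity gives $\Fpol_X(U)\ne0$. The tangent case $V=U$ is excluded, because $V\ne-U=U$. In the chord case $V\ne U$, the root $x_U$ of the cubic $H_\lambda(X)=-\Fpol(X,\lambda(X))$ has multiplicity at least two: if $x_V\ne x_U$ it is already double in the factorization above, and $x_V=x_U$ cannot occur, since $V\ne\pm U$. Therefore $H_\lambda'(x_U)=0$. By the chain rule this reads $\Fpol_X(U)+\Fpol_Y(U)\mu=0$, and since $\Fpol_Y(U)=0$ it forces $\Fpol_X(U)=0$, contradicting nonsingularity. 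Hence $V=\OO$.

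The main obstacle is this last multiplicity step. We must justify that a double root of $H_\lambda$ at $x_U$ yields $H_\lambda'(x_U)=0$ in arbitrary characteristic, which is fine because a double root of a polynomial always annihilates the formal derivative. We must also handle the cases $x_V=x_U$ carefully, using that two finite points with the same $x$-coordinate are equal or negatives of each other.
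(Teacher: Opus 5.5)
Your proof is correct and follows the paper's argument essentially step for step. You reduce to finite $U,V$ with $V\ne-U$, use $W=U$ to place both $U$ and $-U$ on the nonvertical line so that $\Fpol_Y(U)=0$, and then combine $H_\lambda'(x_U)=0$ (from the factor $(X-x_U)^2$) with the chain rule to get $\Fpol_X(U)=0$, contradicting nonsingularity. Your extra case analysis on whether $V=U$ or $x_V=x_U$ is harmless but unnecessary, since the factorization $(X-x_U)^2(X-x_V)$ already gives a double root at $x_U$ in every case.
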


\begin{proof}
If $U=\OO$, the equality $U\oplus V=U$ gives $V=\OO$; if $V=\OO$, the conclusion already holds.
Suppose that $U,V$ are finite.
The case $V=-U$ would give $U\oplus V=\OO$, so $V\ne-U$ and the defining line $Y=\lambda(X)$ is nonvertical.
If $W=U\oplus V=U$, that line contains both $U$ and $-W=-U$.
Points on a nonvertical line with the same $x$-coordinate coincide, hence $U=-U$ and $\Fpol_Y(U)=0$.
By Lemma~\ref{lem:line-factor},
\[
 H_\lambda(X)=(X-x_U)^2(X-x_V),
\]
so $H_\lambda'(x_U)=0$.
On the other hand,
\[
 H_\lambda'(x_U)
 =-\bigl(\Fpol_X(U)+\Fpol_Y(U)\lambda'(x_U)\bigr)
 =-\Fpol_X(U).
\]
This contradicts nonsingularity at $U$.
\end{proof}

\begin{lemma}[Elementary cancellation identities]\label{lem:transport}
For all $U,V\in E(K)$,
\begin{align}
 -(-U)&=U,
 \label{eq:double-negative}\\
 -(U\oplus V)&=(-U)\oplus(-V),
 \label{eq:negative-sum}\\
 (U\oplus V)\oplus(-U)&=V,
 \label{eq:right-transport}\\
 U\oplus\bigl(V\oplus(-U)\bigr)&=V.
 \label{eq:left-transport}
\end{align}
Moreover, $U\oplus V=\OO$ if and only if $V=-U$.
\end{lemma}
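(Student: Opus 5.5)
We prove the four identities in order, disposing of the cases involving $\OO$ or a vertical line first and then treating the generic finite case with Lemma~\ref{lem:line-factor} and the conjugation \eqref{eq:conjugate-line}.

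\textbf{Double negation.} Identity \eqref{eq:double-negative} is immediate from \eqref{eq:negative}: applying it twice gives ordinate $-\Apol(x_P)-\bigl(-\Apol(x_P)-y_P\bigr)=y_P$, and $-(-\OO)=\OO$.

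\textbf{Negation of a sum.} For \eqref{eq:negative-sum}, the cases $U=\OO$, $V=\OO$, or $V=-U$ are trivial, using \eqref{eq:double-negative} to see that $-V=-(-U)=U$. Otherwise $-V\ne-(-U)$, so both sides are given by nonvertical lines. Let $\lambda=\lambda_{U,V}$ and $W=U\oplus V$. The conjugate line $\lambda^\iota$ contains $-U$ and $-V$. By \eqref{eq:conjugate-factor} and Lemma~\ref{lem:line-factor}, its cubic factors as $(X-x_U)(X-x_V)(X-x_W)$.

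When $U\ne V$, the points $-U$ and $-V$ have distinct $x$-coordinates. A nonvertical line through two such points is unique, so $\lambda^\iota=\lambda_{-U,-V}$.

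When $U=V$, the double root at $x_U$ identifies $\lambda^\iota$ as the tangent at $-U$. The argument is the derivative computation in the proof of Lemma~\ref{lem:line-factor} read in reverse. $H_{\lambda^\iota}$ has a double root at $x_U$, so $\Fpol_X(-U)+\Fpol_Y(-U)\mu=0$, where $\mu$ is the slope of $\lambda^\iota$. Since $-U\ne U$, we have $\Fpol_Y(-U)\ne0$, which forces $\mu=\mu_{-U,-U}$.

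In either case the third abscissa of $\lambda_{-U,-V}$ is $x_W$. So $(-U)\oplus(-V)$ is the negative of the point of $\lambda^\iota$ above $x_W$. That point is $-(-W)=W$, because $\lambda$ contains $-W$ and conjugation sends it to $W$. Hence $(-U)\oplus(-V)=-W$.

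\textbf{Right transport.} For \eqref{eq:right-transport}, again first handle $U=\OO$, $V=\OO$ (giving $U\oplus(-U)=\OO$), and $V=-U$ (giving $\OO\oplus(-U)=-U=V$). In the generic case the line $\lambda=\lambda_{U,V}$ passes through $U$, $V$ and $-W$. I would show that $\lambda$ is also the defining line for $W\oplus(-U)$. Its conjugate passes through $-U$ and $W$, and
\[
 H_{\lambda^\iota}=(X-x_W)(X-x_U)(X-x_V)
\]
by \eqref{eq:conjugate-factor}. Thus $\lambda^\iota$ meets $E$ at $W$, $-U$, and third point $-V$, so $\lambda^\iota=\lambda_{W,-U}$. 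Then the definition gives $W\oplus(-U)=-(-V)=V$.

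One subcase needs separate care: $W=-(-U)=U$, which would make the line vertical. Here Lemma~\ref{lem:no-fixed} gives $V=\OO$, contrary to the generic assumption.

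\textbf{Main obstacle.} The real work is identifying $\lambda^\iota$ with the chord or tangent used in the definition when points coincide: $W=-U$, $V=U$, or $V=W$. In those cases uniqueness by two distinct points fails, and the tangent must be recognized through double roots. I will prove a small general criterion and use it uniformly.

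\emph{Criterion.} Let $L$ be a nonvertical line with $H_L=(X-x_A)(X-x_B)(X-x_C)$, where $A,B$ lie on $L$. Then $L=\lambda_{A,B}$. When $x_A=x_B$, this requires $A=B$ (two points of a nonvertical line with equal abscissa coincide), and the double root then makes $L$ the tangent via the chain-rule computation, using $\Fpol_Y(A)\ne0$ (a vertical tangent cannot have a double root at $x_A$ for a nonvertical line). The third point is then $-(A\oplus B)$ on $L$ above $x_C$.

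\textbf{Left transport and the final claim.} With the criterion in hand, \eqref{eq:left-transport} follows from \eqref{eq:right-transport} by substitution. Put $V'=V\oplus(-U)$; then \eqref{eq:right-transport} applied to the pair $(V,-U)$ gives $V'\oplus U=V$. It remains to know $U\oplus V'=V'\oplus U$. Commutativity is evident from the symmetric definition: $\mu_{U,V}=\mu_{V,U}$, and the lines coincide.

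Finally, $U\oplus V=\OO$ forces $V=-U$. If both points are finite and $V\ne-U$, Lemma~\ref{lem:line-factor} gives a finite sum. The cases with $\OO$ are direct: if $U=\OO$ then $V=\OO=-U$.
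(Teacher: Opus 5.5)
Your proof is correct and follows the paper's argument: you conjugate the defining line via \eqref{eq:conjugate-factor}, use Lemma~\ref{lem:no-fixed} to exclude $W=U$, and derive \eqref{eq:left-transport} from \eqref{eq:right-transport} by commutativity, with your criterion spelling out the multiplicity check that the paper compresses into ``has the required multiplicity when $W=-U$.'' Two small slips should be fixed: in the right-transport step it is $\lambda^\iota$, not $\lambda$, that becomes the defining line for $W\oplus(-U)$. In the left-transport step, \eqref{eq:right-transport} must be applied with $-U$ in place of $U$, giving $((-U)\oplus V)\oplus U=V$, and then you need to commute the inner sum as well as the outer one.
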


\begin{proof}
Equation \eqref{eq:double-negative} follows from \eqref{eq:negative}.
Suppose first that $U,V$ are finite and $V\ne-U$, and put $W=U\oplus V$.
By \eqref{eq:conjugate-factor}, conjugating the chord or tangent through $U,V,-W$ gives the chord or tangent through $-U,-V,W$ with the same intersection polynomial.
It follows from Definition~\ref{def:addition} that $(-U)\oplus(-V)=-W$, which proves \eqref{eq:negative-sum} in this case.
If one point is $\OO$, or if $V=-U$, the same identity follows directly from the definition.

We next prove \eqref{eq:right-transport}.
If $U=\OO$, if $V=\OO$, or if $V=-U$, then respectively
\[
 (U\oplus V)\oplus(-U)=V\oplus\OO,
 \qquad
 U\oplus(-U),
 \qquad
 \OO\oplus(-U),
\]
and each expression equals $V$.
Assume now that $U,V$ are finite, $V\ne\OO$, and $V\ne-U$.
Lemma~\ref{lem:no-fixed} gives $W\ne U$; equivalently, $-U\ne-W$, so the sum $W\oplus(-U)$ is defined by a nonvertical chord or tangent.
The conjugate of the line through $U,V,-W$ contains $W,-U,-V$ and has the required multiplicity when $W=-U$.
It is therefore the defining line for $W\oplus(-U)$, whose third intersection point is $-V$.
Hence $W\oplus(-U)=V$.

Apply \eqref{eq:right-transport} with $-U$ in place of $U$:
\[
 ((-U)\oplus V)\oplus U=V.
\]
Commutativity, applied only to the two displayed binary sums, changes its left side into $U\oplus(V\oplus(-U))$ and proves \eqref{eq:left-transport}.
Finally, the definition assigns the value $\OO$ precisely to an inverse pair (including the cases involving $\OO$), which proves the last assertion.
\end{proof}

\begin{lemma}[Reduction to the generic configuration]\label{lem:generic-reduction}
To prove associativity, it is enough to consider finite points $P,Q,R$ satisfying
\begin{equation}\label{eq:generic-conditions}
 R\ne P,-P,
 \qquad
 Q\ne-P,-R,
 \qquad
 P\oplus Q\ne-R,
 \qquad
 Q\oplus R\ne-P.
\end{equation}
\end{lemma}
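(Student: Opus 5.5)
The plan is to show that associativity $(P\oplus Q)\oplus R=P\oplus(Q\oplus R)$ holds directly whenever one of the conditions in \eqref{eq:generic-conditions} fails, or when one of $P,Q,R$ is $\OO$. Each such case will be checked using only Lemma~\ref{lem:transport} and commutativity of $\oplus$. Commutativity follows from Definition~\ref{def:addition}: $\mu_{U,V}=\mu_{V,U}$, the line through $U,V$ is the line through $V,U$, and \eqref{eq:add-x} is symmetric in $U,V$.

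First, if one of $P,Q,R$ equals $\OO$, both bracketings reduce to the binary sum of the other two points, using $X\oplus\OO=\OO\oplus X=X$. So we may assume $P,Q,R$ are finite. Then we treat the failing conditions one at a time.

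\emph{Case $R=P$.} The two sides are $(P\oplus Q)\oplus P$ and $P\oplus(Q\oplus P)$. Commutativity of the outer sum and then of $Q\oplus P$ shows they are equal.

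\emph{Case $R=-P$.} By \eqref{eq:right-transport}, $(P\oplus Q)\oplus(-P)=Q$. By \eqref{eq:left-transport}, $P\oplus(Q\oplus(-P))=Q$.

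\emph{Case $Q=-P$.} The left side is $\OO\oplus R=R$. The right side is $P\oplus((-P)\oplus R)=P\oplus(R\oplus(-P))=R$, by commutativity and \eqref{eq:left-transport}.

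\emph{Case $Q=-R$.} The left side is $(P\oplus Q)\oplus(-Q)=P$, by \eqref{eq:right-transport} with $U=Q$ after commuting $P\oplus Q$. The right side is $P\oplus\OO=P$.

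\emph{Case $P\oplus Q=-R$.} The left side is $\OO$. Applying \eqref{eq:right-transport} to $(Q\oplus P)\oplus(-Q)=P$ and substituting $Q\oplus P=-R$ gives $(-R)\oplus(-Q)=P$. By \eqref{eq:negative-sum} and \eqref{eq:double-negative}, $Q\oplus R=-P$, so the right side is $P\oplus(-P)=\OO$.

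\emph{Case $Q\oplus R=-P$.} This is symmetric to the previous case. From $(R\oplus Q)\oplus(-R)=Q$ we get, in the same way, $P\oplus Q=-R$, so both sides equal $\OO$.

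The last assertion of Lemma~\ref{lem:transport} is what guarantees that an inverse-pair sum equals $\OO$ and conversely, so the cases are mutually consistent.

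The only step requiring care is the $P\oplus Q=-R$ case. There one must derive the companion relation $Q\oplus R=-P$ without any associativity, which is exactly what \eqref{eq:right-transport} combined with \eqref{eq:negative-sum} provides. Once every case is checked, associativity is reduced to finite $P,Q,R$ satisfying \eqref{eq:generic-conditions}.
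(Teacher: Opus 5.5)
Your case split and tools (identity law, commutativity, and Lemma~\ref{lem:transport}) match the paper's proof. The cases with a point equal to $\OO$, $R=\pm P$, $Q=-P$, $Q=-R$, and $P\oplus Q=-R$ are all correct, and your derivation of $Q\oplus R=-P$ from $P\oplus Q=-R$ is exactly the paper's.

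The last case has a wrong step. Substituting $R\oplus Q=-P$ into your instance $(R\oplus Q)\oplus(-R)=Q$ of \eqref{eq:right-transport} gives $(-P)\oplus(-R)=Q$. Then \eqref{eq:negative-sum} yields $P\oplus R=-Q$, not $P\oplus Q=-R$. So ``in the same way'' does not produce the relation needed to conclude $(P\oplus Q)\oplus R=\OO$.

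The fix is to mirror your previous step under $P\leftrightarrow R$, using \eqref{eq:right-transport} with $U=Q$, $V=R$. Then $(Q\oplus R)\oplus(-Q)=R$ becomes $(-P)\oplus(-Q)=R$. By \eqref{eq:negative-sum} and \eqref{eq:double-negative} this gives $-(P\oplus Q)=R$, so $P\oplus Q=-R$; this is the paper's step $V\oplus(-Q)=R$. Equivalently, apply the implication from your preceding case to the triple $(R,Q,P)$. Your relation $P\oplus R=-Q$ could also be carried one step further via $(P\oplus R)\oplus(-P)=R$, but that extra step is missing from what you wrote.
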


\begin{proof}
If one of the points is $\OO$, associativity follows from the identity law.
If $R=P$, then commutativity of the inner and outer binary sums gives
\[
 (P\oplus Q)\oplus P=P\oplus(P\oplus Q)
 =P\oplus(Q\oplus P).
\]
If $R=-P$, both bracketings equal $Q$ by \eqref{eq:right-transport} and \eqref{eq:left-transport}.
If $Q=-P$, then the left bracketing is $R$, while \eqref{eq:left-transport} shows that the right bracketing is also $R$.
If $Q=-R$, the analogous application of \eqref{eq:right-transport} shows that both bracketings are $P$.

It remains to compare the last two exclusions.
Put $S=P\oplus Q$.
Equation \eqref{eq:right-transport} gives $S\oplus(-Q)=P$.
Negating this equality and using \eqref{eq:negative-sum} yields
\[
 (-S)\oplus Q=-P.
\]
Thus $S=-R$ implies $Q\oplus R=-P$.
Conversely, put $V=Q\oplus R$.
Equation \eqref{eq:right-transport} gives $V\oplus(-Q)=R$; after negation and use of \eqref{eq:negative-sum},
\[
 (-V)\oplus Q=-R.
\]
If $V=-P$, this becomes $P\oplus Q=-R$, or $S=-R$.
When either equality holds, both bracketings are $\OO$.
These cases exhaust the failures of \eqref{eq:generic-conditions}.
\end{proof}

\section{The two-line elimination identity}\label{sec:identity}

Let $P,Q,R$ be finite points for which
\[
 S=P\oplus Q,
 \qquad
 T=S\oplus R
\]
are finite and are computed by nonvertical lines.
Write
\[
 Y=\ell(X)=\lambda_{P,Q}(X),
 \qquad
 Y=m(X)=\lambda_{S,R}(X).
\]
The first line contains $-S$, whereas the second contains $S$.
Consequently,
\begin{equation}\label{eq:line-values-at-S}
 \ell(x_S)=-\Apol(x_S)-y_S,
 \qquad
 m(x_S)=y_S.
\end{equation}

\begin{proposition}[Two-line elimination]\label{prop:two-line}
There are unique $c\in K$ and a unique monic quadratic $h\in K[X]$ such that
\begin{equation}\label{eq:c-and-h}
 \ell+m+\Apol=c(X-x_S),
 \qquad
 \Gpol+\ell m=(X-x_S)h.
\end{equation}
They satisfy the identity
\begin{equation}\label{eq:master-identity}
 (Y-\ell)(Y-m)
 =(X-x_S)(h-cY)+\Fpol(X,Y)
\end{equation}
in $K[X,Y]$.
Moreover,
\begin{align}
 h-c\ell&=(X-x_P)(X-x_Q),
 \label{eq:first-quadratic-factor}\\
 h-cm&=(X-x_R)(X-x_T),
 \label{eq:second-quadratic-factor}\\
 h^2+\Apol ch-c^2\Gpol
 &=(X-x_P)(X-x_Q)(X-x_R)(X-x_T).
 \label{eq:quartic-factor}
\end{align}
\end{proposition}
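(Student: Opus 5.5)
Both conditions in \eqref{eq:c-and-h} are divisibility statements by $X-x_S$, so the plan is to check that each left side vanishes at $x_S$ and then use the root test.

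\emph{Existence and uniqueness of $c$ and $h$.} The polynomial $\ell+m+\Apol$ has degree at most one. By \eqref{eq:line-values-at-S} it equals $-\Apol(x_S)-y_S+y_S+\Apol(x_S)=0$ at $x_S$. Hence it is a constant multiple of $X-x_S$, and this constant $c$ is unique.

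For $\Gpol+\ell m$, evaluate at $x_S$. We get $\Gpol(x_S)+(-\Apol(x_S)-y_S)y_S=-\Fpol(x_S,y_S)=0$, since $S\in E$. Therefore $X-x_S$ divides this monic cubic, and the quotient $h$ is a uniquely determined monic quadratic.

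\emph{The master identity.} Expand $(Y-\ell)(Y-m)=Y^2-(\ell+m)Y+\ell m$. Substitute $\ell+m=c(X-x_S)-\Apol$ and $\ell m=(X-x_S)h-\Gpol$. This gives
\[
Y^2+\Apol Y-\Gpol-c(X-x_S)Y+(X-x_S)h,
\]
which is exactly \eqref{eq:master-identity}.

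\emph{The factorizations.} Substitute $Y=\ell(X)$ into \eqref{eq:master-identity}; the left side vanishes. Using Lemma~\ref{lem:line-factor}, $\Fpol(X,\ell)=-(X-x_P)(X-x_Q)(X-x_{S})$, since the third root of the line through $P,Q$ is $x_S$. So
\[
(X-x_S)\bigl(h-c\ell\bigr)=(X-x_P)(X-x_Q)(X-x_S).
\]
Cancelling the factor $X-x_S$ in the domain $K[X]$ gives \eqref{eq:first-quadratic-factor}.

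Substituting $Y=m(X)$ in the same way gives \eqref{eq:second-quadratic-factor}. Here Lemma~\ref{lem:line-factor} for the line through $S,R$ yields $(X-x_S)(X-x_R)(X-x_T)$.

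For \eqref{eq:quartic-factor}, multiply the two quadratic factorizations:
\[
(h-c\ell)(h-cm)=h^2-ch(\ell+m)+c^2\ell m.
\]
Substituting $\ell+m=c(X-x_S)-\Apol$ and $\ell m=(X-x_S)h-\Gpol$, the terms $\mp c^2(X-x_S)h$ cancel. What remains is $h^2+\Apol ch-c^2\Gpol$.

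No step is a real obstacle. The only point needing care is that the tangent cases ($P=Q$ or $S=R$) are already covered, because Lemma~\ref{lem:line-factor} states its factorization with multiplicity. The cancellation of $X-x_S$ is also legitimate in every case, since $K[X]$ is an integral domain.
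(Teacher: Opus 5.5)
Your proposal is correct and follows essentially the same route as the paper's proof. Like the paper, you use the root test at $x_S$ for both divisibilities, direct expansion for \eqref{eq:master-identity}, substitution of $Y=\ell$ and $Y=m$ together with Lemma~\ref{lem:line-factor} and cancellation of $X-x_S$ in $K[X]$, and multiplication of the two quadratic factorizations to obtain \eqref{eq:quartic-factor}.
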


\begin{proof}
The first expression on the left of \eqref{eq:c-and-h} is linear and vanishes at $x_S$ by \eqref{eq:line-values-at-S}; this gives $c$.
At $x_S$, the curve equation gives
\[
 \Gpol(x_S)+\ell(x_S)m(x_S)
 =y_S^2+\Apol(x_S)y_S
 -\Apol(x_S)y_S-y_S^2=0.
\]
Thus the second expression is divisible by $X-x_S$.
Its leading term is $X^3$, so the quotient $h$ is monic of degree two.

Expanding the left side of \eqref{eq:master-identity} and subtracting $\Fpol$ gives
\[
 -\bigl(\ell+m+\Apol\bigr)Y+\ell m+\Gpol
 =(X-x_S)(h-cY),
\]
which proves the identity.
Substitution of $Y=\ell(X)$ into \eqref{eq:master-identity}, followed by Lemma~\ref{lem:line-factor}, gives
\[
 (X-x_S)\bigl(h-c\ell\bigr)
 =(X-x_P)(X-x_Q)(X-x_S).
\]
Since $K[X]$ is an integral domain, cancellation of $X-x_S$ proves \eqref{eq:first-quadratic-factor}.
Substitution of $Y=m(X)$ similarly gives
\[
 (X-x_S)\bigl(h-cm\bigr)
 =(X-x_S)(X-x_R)(X-x_T),
\]
and hence \eqref{eq:second-quadratic-factor}.
Finally,
\begin{align*}
 (h-c\ell)(h-cm)
 &=h^2-ch(\ell+m)+c^2\ell m\\
 &=h^2+\Apol ch-c^2(X-x_S)h+c^2\ell m\\
 &=h^2+\Apol ch-c^2\Gpol,
\end{align*}
where \eqref{eq:c-and-h} is used in the last two lines.
Combining this with \eqref{eq:first-quadratic-factor} and \eqref{eq:second-quadratic-factor} proves \eqref{eq:quartic-factor}.
\end{proof}

\begin{corollary}[Interpolating quadratic]\label{cor:quadratic}
If $c\ne0$ and $q=h/c$, then
\begin{equation}\label{eq:q-factorization}
 q^2+\Apol q-\Gpol
 =c^{-2}(X-x_P)(X-x_Q)(X-x_R)(X-x_T).
\end{equation}
The graph $Y=q(X)$ contains $P,Q,R$, and $-T$.
If a factor on the right side of \eqref{eq:q-factorization} is repeated at $x_0$ and $U=(x_0,q(x_0))$, then
\begin{equation}\label{eq:repeated-root-condition}
 \Fpol_Y(U)q'(x_0)+\Fpol_X(U)=0.
\end{equation}
Thus a repeated factor records tangency of the graph of $q$ to $E$ at $U$ whenever the tangent is nonvertical.
\end{corollary}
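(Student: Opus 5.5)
Dividing \eqref{eq:quartic-factor} by $c^2$ gives \eqref{eq:q-factorization} at once, because $h=cq$ yields
\[
 h^2+\Apol ch-c^2\Gpol=c^2\bigl(q^2+\Apol q-\Gpol\bigr).
\]
The incidences are read off from \eqref{eq:first-quadratic-factor} and \eqref{eq:second-quadratic-factor}. Dividing them by $c$ gives
\[
 q-\ell=c^{-1}(X-x_P)(X-x_Q),
 \qquad
 q-m=c^{-1}(X-x_R)(X-x_T).
\]
Hence $q(x_P)=\ell(x_P)=y_P$ and $q(x_Q)=y_Q$, since $\ell$ is the chord or tangent through $P,Q$. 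Likewise $q(x_R)=m(x_R)=y_R$. Lemma~\ref{lem:line-factor} gives $m(x_T)=-\Apol(x_T)-y_T$, so $q$ passes through $-T$ as well. I will point out that this is an incidence statement, which is stronger than knowing the points lie on the curve, because $q$ agrees with $\ell$ or with $m$ at these abscissae.

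For the tangency statement I would put $\Phi(X)=-\Fpol(X,q(X))=\Gpol-q^2-\Apol q$. By \eqref{eq:q-factorization}, $\Phi$ equals $-c^{-2}$ times the quartic product. A repeated factor at $x_0$ means $x_0$ is a double root of $\Phi$, so $\Phi(x_0)=\Phi'(x_0)=0$. This argument is characteristic-free, because the root multiplicity of a product of linear factors is detected by the formal derivative. From $\Phi(x_0)=0$ we get $U=(x_0,q(x_0))\in E$. The formal chain rule in $K[X]$ gives
\[
 \Phi'(X)=-\bigl(\Fpol_X(X,q(X))+\Fpol_Y(X,q(X))\,q'(X)\bigr).
\]
Evaluating at $x_0$ yields \eqref{eq:repeated-root-condition}. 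The chain rule for a polynomial substitution holds formally: it follows from linearity together with $(q^k)'=kq^{k-1}q'$, with no hypothesis on the characteristic, exactly as in the proof of Lemma~\ref{lem:line-factor}.

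Finally, to interpret \eqref{eq:repeated-root-condition}, I would assume the tangent at $U$ is nonvertical. By \eqref{eq:self-inverse} and the remark after it, this means $\Fpol_Y(U)\ne0$, so $q'(x_0)=-\Fpol_X(U)/\Fpol_Y(U)$, which is the tangent slope from \eqref{eq:slope}. The graph of $q$ is therefore tangent to $E$ at $U$.

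The only delicate step is noting that $U$ is one of the four listed points. This holds because $x_0$ is among $x_P,x_Q,x_R,x_T$ and $q$ passes through the corresponding point or its negative as established above. Everything else is direct substitution, so I expect no real obstacle beyond keeping the identification of the point over $x_T$ as $-T$ rather than $T$.
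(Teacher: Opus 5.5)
Your proposal is correct and follows essentially the same route as the paper. You divide \eqref{eq:quartic-factor} by $c^2$, read the four incidences off \eqref{eq:first-quadratic-factor} and \eqref{eq:second-quadratic-factor}, and formally differentiate $\Gpol-q^2-\Apol q$ at the double root to obtain \eqref{eq:repeated-root-condition}. Your closing remark that $U$ is one of $P,Q,R,-T$ is not needed for the statement as posed, but it is harmless.
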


\begin{proof}
Divide \eqref{eq:quartic-factor} by $c^2$.
Equations \eqref{eq:first-quadratic-factor} and \eqref{eq:second-quadratic-factor} show that
\[
 q(x_P)=y_P,
 \quad q(x_Q)=y_Q,
 \quad q(x_R)=y_R,
 \quad q(x_T)=-\Apol(x_T)-y_T.
\]
If $x_0$ is a repeated root of the right side of \eqref{eq:q-factorization}, its derivative vanishes at $x_0$.
Differentiating the left side and using $q(x_0)=y_U$ gives
\[
 (2y_U+\Apol(x_0))q'(x_0)+a_1y_U-\Gpol'(x_0)
 =\Fpol_Y(U)q'(x_0)+\Fpol_X(U)=0,
\]
which is \eqref{eq:repeated-root-condition}.
If the tangent at $U$ is nonvertical, then $\Fpol_Y(U)\ne0$ and \eqref{eq:repeated-root-condition} is precisely the tangent-slope equation.
\end{proof}

For the right bracketing, put
\[
 V=Q\oplus R,
 \qquad
 T'=P\oplus V,
\]
and apply Proposition~\ref{prop:two-line} to the line through $Q,R$ followed by the line through $V,P$.
Denote the resulting scalar and monic quadratic by $\widetilde c$ and $\widetilde h$; whenever $\widetilde c\ne0$, put $\widetilde q=\widetilde h/\widetilde c$.

\begin{lemma}[Nonvanishing of the elimination scalars]\label{lem:c-nonzero}
Under the generic conditions \eqref{eq:generic-conditions}, both $c$ and $\widetilde c$ are nonzero.
\end{lemma}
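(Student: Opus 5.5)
Suppose $c=0$. The plan is to derive a contradiction from the factorizations in Proposition~\ref{prop:two-line}. First we check that the proposition applies. Under \eqref{eq:generic-conditions}, $Q\ne-P$ makes $\ell$ nonvertical. $S=P\oplus Q\ne-R$ makes $m$ nonvertical, and $S$ is finite because $Q\ne -P$. $T$ is finite by Lemma~\ref{lem:transport}.

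If $c=0$, then \eqref{eq:first-quadratic-factor} and \eqref{eq:second-quadratic-factor} both reduce to statements about $h$ alone:
\[
 h=(X-x_P)(X-x_Q)=(X-x_R)(X-x_T).
\]
By unique factorization in $K[X]$, the multisets $\{x_P,x_Q\}$ and $\{x_R,x_T\}$ coincide. Also, $c=0$ means $m=\ell^\iota$, that is, $m=-\Apol-\ell$. So the line $m$ is the conjugate of $\ell$.

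Next we identify the points on $m$. By Lemma~\ref{lem:line-factor} and \eqref{eq:conjugate-factor}, the line $\ell$ meets $E$ exactly in $P,Q,-S$ with multiplicities. Its conjugate $m$ therefore meets $E$ exactly in $-P,-Q,S$, with the same intersection polynomial. But $m$ is by definition the line through $S,R,-T$, with intersection polynomial $(X-x_S)(X-x_R)(X-x_T)$. The finite points of $E$ on a nonvertical line are determined by their $x$-coordinates, since the $y$-coordinate is the line's value. Hence the multiset of points $\{-P,-Q,S\}$ equals $\{S,R,-T\}$. Removing $S$ gives $\{R,-T\}=\{-P,-Q\}$.

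This forces $R=-P$ or $R=-Q$, and both are excluded by \eqref{eq:generic-conditions}. The one delicate point is the removal of $S$ when $S$ occurs with multiplicity two on one side. This is handled by cancelling the factor $(X-x_S)$ in $K[X]$ and then reading off points from roots along the fixed line $m$. Thus $c\ne0$.

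For $\widetilde c$ we run the same argument with the roles permuted: the first line passes through $Q,R,-V$ and the second through $V,P,-T'$. The hypotheses needed are $R\ne-Q$, $V=Q\oplus R\ne-P$ and $V$ finite, which again come from \eqref{eq:generic-conditions}. If $\widetilde c=0$, the same reasoning gives $\{P,-T'\}=\{-Q,-R\}$. This forces $P=-Q$ or $P=-R$, both excluded.

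The main obstacle I expect is making the multiset identification of points rigorous, including in tangent cases. The cleanest route is to compare the monic cubic intersection polynomials of the single line $m$, which are equal by \eqref{eq:conjugate-factor}. Since all the points lie on the nonvertical line $m$, equality of $x$-root multisets transfers to equality of point multisets.
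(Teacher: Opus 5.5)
Your proof is correct, but it is organized differently from the paper's.

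The paper first classifies the coincidence patterns allowed by \eqref{eq:generic-conditions}: all $x$-coordinates distinct, $P=Q$, or $Q=R$. It disposes of the first two by evaluating $(X-x_P)(X-x_Q)=(X-x_R)(X-x_T)$ at $X=x_R$. Only in the case $Q=R$ does it use the linear relation $\ell+m+\Apol=0$, evaluated at $x_Q$, to get $Q=-Q=-R$. The right bracketing is handled with the same three-way split.

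You instead use $c=0$ globally in the form $m=\ell^\iota$. Then the points of the nonvertical line $m$ above $x_P,x_Q$ are $-P,-Q$. So $x_R\in\{x_P,x_Q\}$ immediately forces $R\in\{-P,-Q\}$, with no case distinction at all.

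Your multiset comparison of $\{-P,-Q,S\}$ with $\{S,R,-T\}$ is harmless but unnecessary. The identity $h=(X-x_P)(X-x_Q)=(X-x_R)(X-x_T)$ from your second paragraph already gives $x_R\in\{x_P,x_Q\}$ without cancelling $X-x_S$. Multiset cancellation is never delicate anyway, so the obstacle you flag is not a real one.

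What your route buys is a sharper statement with a transparent meaning. Whenever the two lines are defined, $c=0$ holds only if $R=-P$ or $R=-Q$, and $c=0$ means the second line is the reflection of the first. Across both bracketings you use every condition in \eqref{eq:generic-conditions} except $R\ne P$, whereas the paper's pattern classification relies on $R\ne P$ to get $x_R\ne x_P$.

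The paper's route pays for a case split. That same classification is needed again for the ordinary versus Hermite interpolation step in Theorem~\ref{thm:general-associativity}, so it is established once and reused there.
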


\begin{proof}
The conditions $Q\ne-P$ and $P\oplus Q\ne-R$ make the two left-bracketing lines nonvertical, while $Q\ne-R$ and $Q\oplus R\ne-P$ do the same for the two right-bracketing lines.
Thus $c$ and $\widetilde c$ are defined by Proposition~\ref{prop:two-line}.
The conditions imply $x_P\ne x_R$.
If two of $x_P,x_Q,x_R$ coincide, the conjugation rule and \eqref{eq:generic-conditions} show that the only possibilities are
\[
 P=Q\ne R
 \qquad\text{or}\qquad
 Q=R\ne P.
\]
Thus there are exactly three patterns: all three $x$-coordinates are distinct, $P=Q$, or $Q=R$.

Suppose first that $c=0$.
Equations \eqref{eq:first-quadratic-factor} and \eqref{eq:second-quadratic-factor} give
\begin{equation}\label{eq:c-zero-polynomials}
 (X-x_P)(X-x_Q)=(X-x_R)(X-x_T).
\end{equation}
If the three $x$-coordinates are distinct, or if $P=Q$, evaluation at $X=x_R$ is a contradiction.
If $Q=R$, the first relation in \eqref{eq:c-and-h}, evaluated at $x_Q$, gives
\[
 0=\ell(x_Q)+m(x_Q)+\Apol(x_Q)
 =2y_Q+\Apol(x_Q).
\]
Hence $Q=-Q=-R$, contrary to $Q\ne-R$.
Therefore $c\ne0$.

If $\widetilde c=0$, the right-bracketing version of \eqref{eq:c-zero-polynomials} is
\[
 (X-x_Q)(X-x_R)=(X-x_P)(X-x_{T'}).
\]
Evaluation at $X=x_P$ is a contradiction when the three $x$-coordinates are distinct or when $Q=R$.
In the remaining case $P=Q$, both right-bracketing lines pass through $P$.
Their sum with $\Apol$ vanishes identically, so evaluation at $x_P$ gives $2y_P+\Apol(x_P)=0$.
Then $P=-P=Q$, contrary to $Q\ne-P$.
Thus $\widetilde c\ne0$.
\end{proof}

\section{The short Weierstrass form}\label{sec:short}

The short Weierstrass case admits the following concise specialization of the two-line identity.
Assume $\operatorname{char}K\ne2,3$, and let $E_0$ be the projective completion of the affine cubic
\begin{equation}\label{eq:short-curve}
 E_0^{\mathrm{aff}}:\qquad Y^2=G_0(X)=X^3+aX+b,
 \qquad
 4a^3+27b^2\ne0.
\end{equation}
Its point at infinity is $\OO=[0:1:0]$, and $-(x,y)=(x,-y)$ on the affine chart.

\begin{theorem}[Associativity in short Weierstrass form]\label{thm:short-associativity}
The chord--tangent operation on $E_0(K)$ is associative.
\end{theorem}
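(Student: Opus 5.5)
We specialize the two-line elimination to $a_1=a_2=a_3=0$, $a_4=a,\ a_6=b$, so that $\Apol=0$, $\Gpol=G_0$ and $\Fpol=Y^2-G_0(X)$. Nonsingularity of $E_0$ amounts to $G_0$ having no repeated root, which is $4a^3+27b^2\ne0$ when $\operatorname{char}K\ne2,3$.

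By Lemma~\ref{lem:generic-reduction}, which applies verbatim with $\Apol=0$, it suffices to treat finite $P,Q,R$ satisfying \eqref{eq:generic-conditions}. Lemma~\ref{lem:c-nonzero} then gives $c,\widetilde c\ne0$. Corollary~\ref{cor:quadratic} yields quadratics $q=h/c$ and $\widetilde q=\widetilde h/\widetilde c$ with
\[
 q^2-G_0=c^{-2}(X-x_P)(X-x_Q)(X-x_R)(X-x_T),
 \qquad
 \widetilde q^{\,2}-G_0=\widetilde c^{-2}(X-x_P)(X-x_Q)(X-x_R)(X-x_{T'}).
\]
Both graphs contain $P,Q,R$, and $-T$ respectively $-T'$. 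The heart of the argument is to show $q=\widetilde q$ by interpolation. The data are the values of the quadratic at $x_P,x_Q,x_R$, together with derivative data at repeated abscissae. There are three patterns, as in the proof of Lemma~\ref{lem:c-nonzero}: the three abscissae are distinct, $P=Q$, or $Q=R$.

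\begin{itemize}
\item When the abscissae are distinct, ordinary Lagrange interpolation on three nodes determines a quadratic uniquely, so $q=\widetilde q$.
\item When $P=Q$, we need $q'(x_P)$. Since $(X-x_P)^2$ divides the quartic, \eqref{eq:repeated-root-condition} gives $2y_P\,q'(x_P)=G_0'(x_P)$. Here $y_P\ne0$, because $Q\ne-P$. So $q'(x_P)$ equals the tangent slope, and the same holds for $\widetilde q$. Hermite interpolation (values at $x_P,x_R$ plus one derivative at $x_P$) again gives uniqueness. The case $Q=R$ is symmetric.
\end{itemize}

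The difference $q-\widetilde q$ has degree at most $2$ and vanishes to total order $3$, so it is zero. This needs only the first derivative, and in particular no division by $2$ beyond what the tangent formula itself uses.

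Once $q=\widetilde q$, the two quartics $q^2-G_0$ and $\widetilde q^{\,2}-G_0$ are the same polynomial. Comparing leading coefficients gives $c^{-2}=\widetilde c^{-2}$. Cancelling the common factor $(X-x_P)(X-x_Q)(X-x_R)$ in the integral domain $K[X]$ then gives $x_T=x_{T'}$. The points $-T$ and $-T'$ both lie on the graph of $q$, so their ordinates both equal $q(x_T)$. Hence $-T=-T'$, and by \eqref{eq:double-negative}, $T=T'$.

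The main obstacle is the Hermite step in the tangent cases. The repeated factor of the quartic must be correctly attributed: for instance, when $P=Q$ we need the double root to come from $h-c\ell=(X-x_P)^2$, and not from an accidental coincidence such as $x_T=x_R$. Otherwise the derivative condition could be forced at the wrong point, or fail to follow from \eqref{eq:first-quadratic-factor}. To settle this, we read the tangency directly from \eqref{eq:first-quadratic-factor}. Differentiating $h-c\ell$ at $x_P$ gives $q'(x_P)=\ell'(x_P)$, and $\ell$ is the tangent line at $P$. This is cleaner than \eqref{eq:repeated-root-condition} and is independent of the other roots. For $\widetilde q$ we use the right-bracketing analogue of \eqref{eq:second-quadratic-factor}, whose factor $(X-x_P)(X-x_{T'})$ has the same feature.

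However, $\widetilde h-\widetilde c\,\widetilde m$ vanishes only simply at $x_P$ unless $x_{T'}=x_P$. So for $\widetilde q$ we fall back on the quartic. There the factor $(X-x_P)(X-x_Q)=(X-x_P)^2$ appears explicitly, and \eqref{eq:repeated-root-condition} applies with $\Fpol_Y(P)=2y_P\ne0$.
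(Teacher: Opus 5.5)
Your proposal is correct and follows the paper's proof essentially step for step. It reduces via Lemma~\ref{lem:generic-reduction}, uses $c,\widetilde c\ne0$, and proves $q=\widetilde q$ by ordinary or Hermite interpolation, with the derivative at the doubled abscissa read off from the repeated factor of the quartic and $2y_P\ne0$. It then cancels the common cubic factor to get $x_T=x_{T'}$ and $y_T=y_{T'}$.

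The concern in your last two paragraphs is unnecessary. \eqref{eq:repeated-root-condition} holds at $x_P$ whenever $(X-x_P)^2$ divides the quartic, whatever other coincidences occur among the roots, and this is exactly how the paper treats both $q$ and $\widetilde q$. Your alternative $q'(x_P)=\ell'(x_P)$ from \eqref{eq:first-quadratic-factor} is valid but not needed.
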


\begin{proof}
By Lemma~\ref{lem:generic-reduction}, only the conditions \eqref{eq:generic-conditions} need to be considered.
Let
\[
 S=P\oplus Q,
 \quad
 T=S\oplus R,
 \quad
 V=Q\oplus R,
 \quad
 T'=P\oplus V.
\]
For the left bracketing, let $\ell$ be the line through $P,Q,-S$ and let $m$ be the line through $S,R,-T$.
Proposition~\ref{prop:two-line}, with $\Apol=0$ and $\Gpol=G_0$, gives
\begin{equation}\label{eq:short-master}
 (Y-\ell)(Y-m)
 =(X-x_S)(h-cY)+Y^2-G_0(X),
\end{equation}
where $c$ is the sum of the slopes of $\ell$ and $m$.
By Lemma~\ref{lem:c-nonzero}, $c\ne0$, and $q=h/c$ satisfies
\begin{equation}\label{eq:short-quartic}
 q(X)^2-G_0(X)
 =c^{-2}(X-x_P)(X-x_Q)(X-x_R)(X-x_T).
\end{equation}
Thus the parabola $Y=q(X)$ passes through $P,Q,R,-T$.

Lemma~\ref{lem:c-nonzero} also gives $\widetilde c\ne0$ for the right bracketing.
It therefore yields a quadratic $\widetilde q$ through $P,Q,R,-T'$ and a factorization
\begin{equation}\label{eq:short-quartic-right}
 \widetilde q(X)^2-G_0(X)
 =\widetilde c^{-2}(X-x_P)(X-x_Q)(X-x_R)(X-x_{T'}).
\end{equation}
If $x_P,x_Q,x_R$ are distinct, ordinary quadratic interpolation gives $q=\widetilde q$.
If $P=Q$, the double factor at $x_P$ in \eqref{eq:short-quartic} gives
\[
 2y_Pq'(x_P)-G_0'(x_P)=0,
 \qquad
 q'(x_P)=\frac{G_0'(x_P)}{2y_P}.
\]
The denominator is nonzero because $P\ne-P$.
The same derivative is obtained for $\widetilde q$, and both quadratics also take the value $y_R$ at $x_R$.
Hermite interpolation therefore gives $q=\widetilde q$.
If $Q=R$, both quadratics have derivative $G_0'(x_Q)/(2y_Q)$ at $x_Q$ and take the value $y_P$ at the distinct coordinate $x_P$; Hermite interpolation gives the same conclusion.

After $q=\widetilde q$ is substituted into \eqref{eq:short-quartic} and \eqref{eq:short-quartic-right}, cancellation of the common cubic factor gives
\[
 c^{-2}(X-x_T)=\widetilde c^{-2}(X-x_{T'}).
\]
Comparison of the leading coefficients first gives $c^{-2}=\widetilde c^{-2}\ne0$, and comparison of the constant coefficients then gives $x_T=x_{T'}$.
Finally, $q(x_T)=-y_T$ and $q(x_{T'})=-y_{T'}$, so $y_T=y_{T'}$.
Hence $T=T'$, as required.
\end{proof}

\section{Associativity for the general equation}\label{sec:general-associativity}

\begin{theorem}[General Weierstrass group law]\label{thm:general-associativity}
Let $K$ be any field and let $E/K$ be the nonsingular projective curve whose affine chart is given by \eqref{eq:intro-general}.
Then the operation of Definition~\ref{def:addition} is associative.
Consequently, $E(K)$ is an abelian group with identity $\OO$ and inverse given by \eqref{eq:negative}.
\end{theorem}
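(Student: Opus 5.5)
By Lemma~\ref{lem:generic-reduction}, it suffices to treat finite points $P,Q,R$ satisfying \eqref{eq:generic-conditions}. The plan then mirrors the proof of Theorem~\ref{thm:short-associativity}, replacing $G_0$ by the general data $\Apol,\Gpol$.

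\textbf{Setting up the two quadratics.} Put $S=P\oplus Q$, $T=S\oplus R$, $V=Q\oplus R$ and $T'=P\oplus V$. The generic conditions make all four defining lines nonvertical. Proposition~\ref{prop:two-line} therefore applies to both bracketings, and Lemma~\ref{lem:c-nonzero} gives $c\ne0$ and $\widetilde c\ne0$. Corollary~\ref{cor:quadratic} then yields quadratics $q=h/c$ and $\widetilde q=\widetilde h/\widetilde c$ with
\[
 q^2+\Apol q-\Gpol=c^{-2}(X-x_P)(X-x_Q)(X-x_R)(X-x_T),
\]
\[
 \widetilde q^2+\Apol\widetilde q-\Gpol=\widetilde c^{-2}(X-x_P)(X-x_Q)(X-x_R)(X-x_{T'}).
\]
Both graphs pass through $P,Q,R$.

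\textbf{Showing $q=\widetilde q$.} The proof of Lemma~\ref{lem:c-nonzero} shows there are exactly three patterns.
\begin{itemize}
\item If $x_P,x_Q,x_R$ are distinct, $q$ and $\widetilde q$ agree at three distinct abscissas, and uniqueness of interpolation in degree at most two gives $q=\widetilde q$.
\item If $P=Q\ne R$, the factor $(X-x_P)$ is repeated on both right sides. By \eqref{eq:repeated-root-condition}, both $q'(x_P)$ and $\widetilde q'(x_P)$ equal $-\Fpol_X(P)/\Fpol_Y(P)$. This is legitimate because $P\ne-P$ (since $Q\ne-P$), so $\Fpol_Y(P)\ne0$ by \eqref{eq:self-inverse}. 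Together with the common values at $x_P$ and at $x_R\ne x_P$, Hermite interpolation (value and derivative at one node, value at another) determines a quadratic uniquely. The uniqueness step is elementary: a difference of degree at most two with a double root and another root is zero.
\item If $Q=R\ne P$, the argument is the same, using $Q\ne-R$.
\end{itemize}
The main point needing care is exactly this tangency transfer in characteristic $2$. There $\Fpol_Y=a_1X+a_3$ does not involve $Y$, but \eqref{eq:repeated-root-condition} was derived purely formally, so it still holds. Nonvanishing of $\Fpol_Y(P)$ comes from $P\ne-P$ via \eqref{eq:self-inverse}, not from any division by $2$.

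\textbf{Identifying $T$ and $T'$.} With $q=\widetilde q$, the two left sides coincide. Cancel the common nonzero cubic $(X-x_P)(X-x_Q)(X-x_R)$ in the integral domain $K[X]$ to get $c^{-2}(X-x_T)=\widetilde c^{-2}(X-x_{T'})$. Comparing leading coefficients gives $c^{-2}=\widetilde c^{-2}$, and comparing constant terms then gives $x_T=x_{T'}$. Corollary~\ref{cor:quadratic} gives $q(x_T)=-\Apol(x_T)-y_T$ and likewise for $T'$. Hence $y_T=y_{T'}$, so $T=T'$.

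\textbf{Group axioms.} Associativity is now established. Commutativity is built into Definition~\ref{def:addition}, since $\mu_{U,V}=\mu_{V,U}$ and \eqref{eq:add-x} is symmetric in $U,V$. $\OO$ is the identity by definition, and Lemma~\ref{lem:transport} shows $-U$ is the inverse. Hence $E(K)$ is an abelian group.
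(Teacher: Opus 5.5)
Your proposal is correct and follows the paper's proof essentially step for step: reduction to \eqref{eq:generic-conditions}, the two quadratics from Corollary~\ref{cor:quadratic} and Lemma~\ref{lem:c-nonzero}, ordinary or Hermite interpolation in the three coincidence patterns using $\Fpol_Y(P)\ne0$ from $P\ne-P$, and cancellation of the common cubic to get $x_T=x_{T'}$ and then $y_T=y_{T'}$. One small addition: commutativity of \eqref{eq:add-y} also needs $\lambda_{U,V}=\lambda_{V,U}$, which holds because both describe the same chord or tangent.
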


\begin{proof}
The chord or tangent in Definition~\ref{def:addition} is unchanged when its two inputs are interchanged; equations \eqref{eq:add-x} and \eqref{eq:add-y} therefore show that $\oplus$ is commutative.
The identity and inverse laws are part of the definition, and Lemma~\ref{lem:line-factor} gives closure in $E(K)$.
It remains to prove associativity.
Lemma~\ref{lem:generic-reduction} disposes of all exceptional configurations using only Lemma~\ref{lem:transport}.
Assume henceforth that \eqref{eq:generic-conditions} holds and use the notation
\[
 S=P\oplus Q,
 \quad
 T=S\oplus R,
 \quad
 V=Q\oplus R,
 \quad
 T'=P\oplus V.
\]
The generic conditions ensure that all four sums are finite and their defining lines are nonvertical.
Indeed, $Q\ne-P$ makes $S$ finite, and $S\ne-R$ makes $T$ finite; these two conditions also make the corresponding defining lines nonvertical.
Likewise, $Q\ne-R$ and $V\ne-P$ make $V$ and $T'$ finite and their defining lines nonvertical.

Corollary~\ref{cor:quadratic} and Lemma~\ref{lem:c-nonzero} give quadratics $q,\widetilde q\in K[X]$ such that
\begin{align}
 q^2+\Apol q-\Gpol
 &=c^{-2}D(X)(X-x_T),
 \label{eq:left-q-factor}\\
 \widetilde q^2+\Apol\widetilde q-\Gpol
 &=\widetilde c^{-2}D(X)(X-x_{T'}),
 \label{eq:right-q-factor}
\end{align}
where
\[
 D(X)=(X-x_P)(X-x_Q)(X-x_R).
\]
Both $q$ and $\widetilde q$ take the values $y_P,y_Q,y_R$ at the corresponding $x$-coordinates.

The possible coincidence patterns were identified in the proof of Lemma~\ref{lem:c-nonzero}.
If $x_P,x_Q,x_R$ are distinct, uniqueness of quadratic interpolation gives
\begin{equation}\label{eq:q-equality}
 q=\widetilde q.
\end{equation}
Suppose next that $P=Q$.
Then $P\ne-P$ by \eqref{eq:generic-conditions}, so $\Fpol_Y(P)\ne0$ by \eqref{eq:self-inverse}.
The right side of \eqref{eq:left-q-factor} has a double zero at $x_P$.
Formal differentiation of its left side, followed by evaluation at $x_P$, gives
\begin{align*}
 0
 &=(2q(x_P)+\Apol(x_P))q'(x_P)
   +a_1q(x_P)-\Gpol'(x_P)\\
 &=\Fpol_Y(P)q'(x_P)+\Fpol_X(P).
\end{align*}
Therefore
\begin{equation}\label{eq:hermite-slope}
 q'(x_P)=-\frac{\Fpol_X(P)}{\Fpol_Y(P)}.
\end{equation}
The same calculation applied to \eqref{eq:right-q-factor} gives the same value for $\widetilde q'(x_P)$.
The two quadratics agree in value and derivative at $x_P$, and they agree in value at the distinct point $x_R$.
Their difference is divisible by $(X-x_P)^2(X-x_R)$ and has degree at most two, so it is zero.
This proves \eqref{eq:q-equality} when $P=Q$.
The case $Q=R$ is identical, with the Hermite condition imposed at $Q$.

Using \eqref{eq:q-equality} in \eqref{eq:left-q-factor} and \eqref{eq:right-q-factor}, and cancelling the nonzero polynomial $D$, gives
\begin{equation}\label{eq:linear-comparison}
 c^{-2}(X-x_T)=\widetilde c^{-2}(X-x_{T'}).
\end{equation}
The leading coefficients in \eqref{eq:linear-comparison} give $c^{-2}=\widetilde c^{-2}\ne0$, and the constant coefficients then give $x_T=x_{T'}$.
By Corollary~\ref{cor:quadratic},
\[
 q(x_T)=-\Apol(x_T)-y_T,
 \qquad
 \widetilde q(x_{T'})=-\Apol(x_{T'})-y_{T'}.
\]
Since $q=\widetilde q$ and $x_T=x_{T'}$, these equations give $y_T=y_{T'}$.
Thus
\[
 (P\oplus Q)\oplus R=T=T'=P\oplus(Q\oplus R),
\]
which proves associativity.
\end{proof}

\begin{remark}[Repeated roots]\label{rem:multiplicity}
The multiplicity information used above consists entirely of double roots of univariate polynomials.
Formal differentiation converts each such root into the first-order condition \eqref{eq:hermite-slope}, which is exactly the datum required for quadratic Hermite interpolation.
\end{remark}

\section{Characteristics two and three}\label{sec:small-characteristic}

The formulas specialize directly to characteristics $2$ and $3$.
The integers appearing in \eqref{eq:partials} and \eqref{eq:slope} are coefficients in $K$ and are reduced modulo the characteristic; neither $2$ nor $3$ occurs as a denominator.
The only tangent denominator is $\Fpol_Y(P)$, and it is used precisely when $P\ne-P$.

In characteristic $2$,
\begin{equation}\label{eq:char2-negation}
 -(x,y)=\bigl(x,y+\Apol(x)\bigr),
 \qquad
 \Fpol_Y(x,y)=\Apol(x)=a_1x+a_3.
\end{equation}
Thus a finite point is self-inverse exactly when $a_1x+a_3=0$.
At a non-self-inverse point, the tangent slope is
\begin{equation}\label{eq:char2-slope}
 \mu_{P,P}
 =\frac{x_P^2+a_4+a_1y_P}{a_1x_P+a_3}.
\end{equation}
If the denominator in \eqref{eq:char2-slope} vanishes, the point is self-inverse and Definition~\ref{def:addition} assigns $P\oplus P=\OO$; no slope is required.

In characteristic $3$,
\[
 \Gpol'(X)=-a_2X+a_4,
 \qquad
 \Fpol_Y(X,Y)=-Y+\Apol(X),
\]
and the tangent slope at a non-self-inverse point is
\begin{equation}\label{eq:char3-slope}
 \mu_{P,P}
 =\frac{-a_2x_P+a_4-a_1y_P}
 {-y_P+a_1x_P+a_3}.
\end{equation}
This is the specialization of \eqref{eq:slope} in characteristic $3$.

The interpolation argument is also independent of the size of $K$.
Over $\mathbb F_2$, the case of three distinct $x$-coordinates is vacuous, and the repeated-coordinate cases are covered by Hermite interpolation.
No scalar extension is needed.

\section{Relation to Zwegers's parabola and other proofs}\label{sec:comparison}

For the short equation \eqref{eq:short-curve}, let $\alpha$ be the slope of the line through $P,Q$ and let $\widetilde\alpha$ be the slope of the line through $P\oplus Q,R$.
The coefficient $c$ in \eqref{eq:c-and-h} is the sum of these slopes:
\begin{equation}\label{eq:c-short-slopes}
 c=\alpha+\widetilde\alpha.
\end{equation}
Zwegers denotes by $c_2$ the leading coefficient of his interpolating parabola and proves the identity \cite[Lemma~2.2]{Zwegers2024}
\[
 c_2(\alpha+\widetilde\alpha)=1.
\]
It follows that
\begin{equation}\label{eq:zwegers-identification}
 c_2=c^{-1}.
\end{equation}
His parabola is
\[
 p(X)=y_Q+(X-x_Q)(c_1+Xc_2).
\]
If $x_P,x_Q,x_R$ are distinct, both $p$ and the polynomial $q=h/c$ of Corollary~\ref{cor:quadratic} take the prescribed values $y_P,y_Q,y_R$; uniqueness of quadratic interpolation gives $p=q$.
Their leading coefficients agree as well: $h$ is monic, so the leading coefficient of $q$ is $c^{-1}=c_2$.
If $P=Q$, then $p'(x_P)=c_1+x_Pc_2=\alpha(P,P)$; if $Q=R$, the analogous identity is $p'(x_Q)=c_1+x_Qc_2=\alpha(Q,Q)$.
Thus Zwegers's defining relations make $p$ satisfy the corresponding tangent condition, while \eqref{eq:repeated-root-condition} gives the same condition for $q$.
Quadratic Hermite interpolation again yields $p=q$.

Zwegers determines the fourth intersection of this parabola with the cubic and derives coordinate formulas symmetric in the first and third input points.
The two-line identity \eqref{eq:master-identity} determines the same parabola by eliminating the intermediate pair of lines.
Associativity then follows by comparing the interpolation data arising from the two bracketings, without requiring an explicit formula for the fourth point.
Thus, in short Weierstrass form, the two constructions coincide at the level of the quadratic curve, while the subsequent algebraic comparisons are different.

The general Weierstrass equation changes the reflection in the short-form construction from $Y\mapsto-Y$ to
\[
 Y\longmapsto-\Apol(X)-Y.
\]
The corresponding quadratic substitution is no longer $q^2-\Gpol$ but
\[
 q^2+\Apol q-\Gpol.
\]
These additional terms are exactly those required to cancel the two conjugate values above the intermediate coordinate $x_S$.
Formal differentiation then supplies the repeated-point condition through $\Fpol_X$ and $\Fpol_Y$, so the same parabola argument remains valid in characteristics $2$ and $3$.

Other elementary proofs organize the associativity calculation in different ways.
Washington uses a projective nine-point configuration for two reducible cubics \cite[Section~2.4]{Washington2008}.
Friedl starts from explicit rational addition formulas and verifies the resulting polynomial identities with computer algebra \cite{Friedl2017}.
Nuida treats the general Weierstrass equation in arbitrary characteristic by finite-dimensional linear algebra, including coincident points and the field $\mathbb F_2$ \cite{Nuida2021}.
Angdinata and Xu establish the group law for general Weierstrass curves in every characteristic by a purely algebraic argument formalized in Lean \cite{AngdinataXu2023}.
In the present construction, elimination of the two successive lines reduces associativity to uniqueness of a polynomial of degree at most two subject to three ordinary or Hermite interpolation conditions; equations \eqref{eq:c-short-slopes}--\eqref{eq:zwegers-identification} identify its short-form specialization with Zwegers's parabola.


\begin{thebibliography}{99}

\bibitem{AngdinataXu2023}
D.~K. Angdinata and J.~Xu,
\emph{An elementary formal proof of the group law on Weierstrass elliptic curves in any characteristic},
in \emph{14th International Conference on Interactive Theorem Proving (ITP 2023)},
LIPIcs \textbf{268} (2023), Art.~6, 6:1--6:19.
\href{https://doi.org/10.4230/LIPIcs.ITP.2023.6}{doi:10.4230/LIPIcs.ITP.2023.6}.

\bibitem{Friedl2017}
S.~Friedl,
\emph{An elementary proof of the group law for elliptic curves},
Groups Complexity Cryptology \textbf{9} (2017), no.~2, 117--123.
\href{https://doi.org/10.1515/gcc-2017-0010}{doi:10.1515/gcc-2017-0010}.

\bibitem{Nuida2021}
K.~Nuida,
\emph{An elementary linear-algebraic proof without computer-aided arguments for the group law on elliptic curves},
International Journal of Mathematics for Industry \textbf{13} (2021), no.~1, 2150001.
\href{https://doi.org/10.1142/S2661335221500015}{doi:10.1142/S2661335221500015}.

\bibitem{Silverman2009}
J.~H. Silverman,
\emph{The Arithmetic of Elliptic Curves},
2nd ed., Graduate Texts in Mathematics, vol.~106, Springer, New York, 2009.
\href{https://doi.org/10.1007/978-0-387-09494-6}{doi:10.1007/978-0-387-09494-6}.

\bibitem{Washington2008}
L.~C. Washington,
\emph{Elliptic Curves: Number Theory and Cryptography},
2nd ed., Discrete Mathematics and Its Applications, Chapman \& Hall/CRC, Boca Raton, 2008.
\href{https://doi.org/10.1201/9781420071474}{doi:10.1201/9781420071474}.

\bibitem{Zwegers2024}
S.~Zwegers,
\emph{An elementary approach to the group law on elliptic curves},
Archiv der Mathematik \textbf{123} (2024), 477--486.
\href{https://doi.org/10.1007/s00013-024-02041-6}{doi:10.1007/s00013-024-02041-6}.

\end{thebibliography}
\end{document}